\documentclass[12 pt]{article}

\usepackage[utf8]{inputenc}
\usepackage{amsmath}
\usepackage{amsfonts}
\usepackage{amssymb}
\usepackage{mathtools}

\usepackage{cite}

\usepackage{graphicx}
\usepackage{caption}
\usepackage{listings}
\usepackage{verbatim}
\usepackage{wrapfig}
\usepackage{array}
\usepackage{subfigure}

\usepackage{tabularx}

\usepackage{caption}

\usepackage{amsthm}

\theoremstyle{lema}

\theoremstyle{proposition}

\theoremstyle{theorem}
\newtheorem{theorem}{Theorem}[section]

\theoremstyle{theorem}
\newtheorem{remark}{Remark}[section]

\theoremstyle{corollary}

\theoremstyle{definition}
\newtheorem{definition}{Definition}[section]

\theoremstyle{example}
\newtheorem{example}{Example}[section]

\def\n{\mathbb N}

\providecommand{\keywords}[1]
{
	\small	
	\textbf{\textit{Keywords---}} #1
}

\providecommand{\msc}[1]
{
	\small	
	\textbf{\textit{Mathematics Subject Classification---}} #1
}

\title{On fixed point results in metric spaces for large triangle-perimeter contractions}
\author{Ovidiu Popescu 
	\\ \small{Faculty of Mathematics and Computer Science,}\\ \small{Transilvania University of Bra\c sov, Bulevardul Eroilor 29, Bra\c sov}\\
	\small{email: {ovidiu.popescu@unitbv.ro}}}
\date{}

\begin{document}
	
	\maketitle

	\begin{abstract}
		In this paper we introduce a corrected extension of Burton's theory of large contractions in the context of triangle-perimeter contractions introduced by Petrov. Combining these two lines of research, we prove a fixed point result for large triangle-perimeter contractions with an auxiliary assumption, which is of utmost importance. Firstly, we give a counterexample to the main result related to large triangle-perimeter contractions that currently exists in literature. Then, we prove that given an additional condition, a fixed point result for large triangle-perimeter contractions holds. Lastly, we illustrate with an example that this new framework is strictly broader than Burton's and Petrov's theory.
	\end{abstract}
	
	\keywords{large contractions, triangle-perimeter contractions, fixed point}
	
	\msc{47H10, 47H09}

	\section{Introduction}
	
	Starting with Banach's contraction principle in \cite{Banach}, the theory of fixed points has known an increasing development. Over the time, numerous mathematicians have generalized and extended Banach's theorem. Among them, we mention the contribution of Burton \cite{Burton2}, who introduced the concept of large contractions, where, compared to Banach's theorem, contractivity is not uniform on the entire space. For the newly introduced large contractions, it was proved that they still possess a unique fixed point if there exists a bounded orbit. This direction of research has been further developed by Dehici et al. in \cite{Dehici3}, where they studied large Kannan-type contractions combining Kannan's mappings \cite{Kannan} with Burton's large contractions. For these contractions, they showed that the orbit-boundness assumptions can be weakened, or even removed on bounded or compact metric spaces. Moreover, Mesmouli et al.  introduced in \cite{Mesmouli4} large Chatterjea-type contractions, combining large contractions with Chatterjea's mappings \cite{Chatterjea}. For these mappings, they presented some fixed-point results.
	
	Recently, Petrov \cite{Petrov} introduced triangle-perimeter contractions, which are defined through the perimeter of triangles formed by three iterates of a mapping.
	
	Based on Petrov's geometric method and Burton's nonuniform contractions, very recently Mesmouli et al. \cite{Mesmouli}, introduced the notion of large triangle-perimeter contraction. However, the fixed point result given does not hold in the context presented in \cite{Mesmouli}.
	
	The aim of this paper is to give a correct extension of Burton's theory  from \cite{Burton2} in the context introduced by Petrov in \cite{Petrov}, by proving a fixed point result for large-perimeter contractions with an auxiliary assumption, which is of utmost importance. Firstly, we give a counterexample to the main result in \cite{Mesmouli} (specifically, Theorem 3). Then, we prove that given an additional condition, a fixed point result for large triangle-perimeter contractions holds. Lastly, we prove that this new framework is strictly broader than Burton's and Petrov's theory.

	\section{Preliminaries}
	
	
	Let us recall some results related to large contractions and triangle-perimeter contractions.
	
	\begin{definition}[Burton \cite{Burton2}]
		Let \((X,d)\) be a complete metric space with \(|X| \geq 3\). A mapping \(T : X \to X \) is a large contraction if 
		\begin{itemize}
			\item[1)] \(d(Tx,Ty) < d(x,y)\), for all \(x \neq y\)
			\item[2)] for every \(\varepsilon > 0\), there exists a constant \(\delta(\varepsilon) \in (0,1)\) such that 
			\begin{equation*}
			x,y \in X, \; d(x,y) \geq \varepsilon \quad \text{implies} \quad d(Tx,Ty) \leq\delta(\varepsilon) d(x,y).
			\label{*}
			\end{equation*}
		\end{itemize} 
	\end{definition}

	\begin{theorem}[Burton \cite{Burton2}]
		Let $(X,d)$ be a complete metric space with \(|X| \geq 3\), and $T:X\to X$ be a large contraction. If there exists an $x_0 \in X$ and $L > 0$ such that 
		\[ d(x_0,T^nx_0) \leq L, \quad \forall n \geq 1,\]
		 then $T$ has a unique fixed point in \(X\).
	\end{theorem}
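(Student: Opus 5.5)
Uniqueness comes first and is immediate. If $x\neq y$ were both fixed points, condition 1) would give $d(x,y)=d(Tx,Ty)<d(x,y)$, which is impossible. For existence, we set $x_n=T^nx_0$ and aim to show that $(x_n)$ is Cauchy. Completeness then gives a limit $x^*$. Since condition 1) makes $T$ $1$-Lipschitz, hence continuous, $x^*=\lim x_{n+1}=Tx^*$.

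To prove the Cauchy property, we argue by contradiction. Suppose there are $\varepsilon>0$ and pairs of indices $m_k>n_k\to\infty$ with $d(x_{n_k},x_{m_k})\ge\varepsilon$. We first use the bounded orbit. For all $m,n\ge 0$,
\[
d(x_n,x_m)\le d(x_n,x_0)+d(x_0,x_m)\le 2L,
\]
so every mutual distance in the orbit is at most $2L$. Next we exploit monotonicity along backward shifts. By condition 1), for each fixed $j$ the map $i\mapsto d(x_{n-i},x_{m-i})$ is nondecreasing as $i$ increases, for $0\le i\le n$. Consequently, if $d(x_n,x_m)\ge\varepsilon$, then every earlier pair satisfies $d(x_{n-i},x_{m-i})\ge d(x_n,x_m)\ge\varepsilon$. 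Condition 2) therefore applies at each step, and
\[
d(x_n,x_m)\le \delta(\varepsilon)\,d(x_{n-1},x_{m-1})\le\cdots\le \delta(\varepsilon)^n d(x_0,x_{m-n})\le \delta(\varepsilon)^n L .
\]
Because $\delta(\varepsilon)<1$, the right-hand side tends to $0$. Choosing $n$ large enough that $\delta(\varepsilon)^nL<\varepsilon$ contradicts $d(x_n,x_m)\ge\varepsilon$. Hence for every $\varepsilon>0$ there is $N$ with $d(x_n,x_m)<\varepsilon$ whenever $m>n\ge N$, and the sequence is Cauchy.

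The main obstacle is making the iteration of condition 2) legitimate. The inequality may only be applied to pairs already known to lie at distance at least $\varepsilon$, and $\delta$ depends on $\varepsilon$, so it does not give a uniform contraction. The backward monotonicity from condition 1) settles this: once the final pair is $\varepsilon$-apart, all earlier pairs are too, so a single constant $\delta(\varepsilon)$ works along the whole chain. Only after that does the bounded-orbit hypothesis supply the finite starting bound $L$.
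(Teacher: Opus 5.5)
Your proof is correct. The paper does not prove this statement itself; it only quotes it from Burton. Your argument is, however, the two-point version of the paper's proof of its main theorem. There too, condition (i) shows that the separation propagates backwards along the orbit: the perimeters $P(x_{m_k+1-i},x_{m_k-i},x_{n_k-i})$ grow with $i$. This is what lets a single constant $\delta(\varepsilon_0/3)$ be iterated $n_k$ times against the bounded-orbit estimate $4L$.

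Where the two routes differ, the setting forces the difference and it favours you. A large contraction satisfies $d(Tx,Ty)\le d(x,y)$ for every pair, so you need no distinctness bookkeeping and can pass to the limit by continuity. The paper must first show that the orbit points are pairwise distinct, which uses the no-period-2 hypothesis. It then applies condition (i) to the triangles $(x_n,x_{n+1},x^*)$, with a separate case when $x^*$ lies on the orbit. Likewise, your uniqueness argument is one line, while the perimeter version only yields at most two fixed points.

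Two cosmetic points. The phrase ``for each fixed $j$'' in your monotonicity sentence refers to nothing. The bound $d(x_n,x_m)\le 2L$ is never used, since only $d(x_0,x_{m-n})\le L$ with $m-n\ge 1$ enters the estimate.
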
 

	\begin{remark}
		In fact, condition 2) of large contractions implies condition 1), since if \(x\neq y\), then \(d(x,y) > 0\). Taking \(\varepsilon = d(x,y)\), there exists \(\delta(\varepsilon) \in (0,1)\) such that \(d(Tx,Ty) \leq \delta(\varepsilon) d(x,y) < d(x,y)\).
	\end{remark}

	\begin{definition}[Petrov \cite{Petrov}]
		Let $(X,d)$ be a metric space with  $|X|\geq 3$. A mapping $T \colon X\to X$ is called triangle-perimeter contraction if there exists a constant $\alpha \in [0,1)$ such that 
		\[P(Tx,Ty,Tz) \leq \alpha P(x,y,z),\]
		 for all three pairwise distinct points $x,y,z \in X$, where
		$$P(a,b,c) := d(a,b)+d(b,c)+d(a,c),$$
		represents the perimeter of the triangle with vertices \(a,b,c\).
	\end{definition}

	\begin{theorem}[Petrov \cite{Petrov}]
		Let $(X,d)$ be a complete metric space with  $|X|\geq 3$. If $T \colon X\to X$ is a triangle-perimeter contraction and has no periodic points of prime period $2$ (i.e. \(T^2x\neq x\), for every \(x \in X\)), then $T$ has a fixed point. The number of fixed points is at most two.
	\end{theorem}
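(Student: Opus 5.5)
The plan is to follow Petrov's argument: show that the Picard orbit $x_n=T^nx_0$ is Cauchy by controlling the perimeters of consecutive triangles, then pass to the limit.

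\emph{Setup.} Fix $x_0\in X$ and set $x_n=T^nx_0$. If $x_n=x_{n+1}$ for some $n$, then $x_n$ is a fixed point and we are done. If $x_{n+2}=x_n$, then $T^2x_n=x_n$, which is excluded by hypothesis. Hence we may assume $x_n,x_{n+1},x_{n+2}$ are pairwise distinct for every $n$. Put $p_n=P(x_n,x_{n+1},x_{n+2})$. Applying the contraction condition to the triple $(x_n,x_{n+1},x_{n+2})$ gives $p_{n+1}\le \alpha p_n$, and therefore $p_n\le \alpha^n p_0$.

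\emph{Cauchy property.} Since $d(x_n,x_{n+1})\le p_n\le\alpha^np_0$, the triangle inequality gives
\[
d(x_n,x_m)\le \sum_{k=n}^{m-1}\alpha^k p_0\le \frac{\alpha^n}{1-\alpha}\,p_0 .
\]
So $(x_n)$ is Cauchy, and by completeness $x_n\to x^*$.

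\emph{Fixed point.} This is the step I expect to be the main obstacle, because $T$ is not assumed continuous. I would first show that $x_n\neq x^*$ for all large $n$; this can fail only if $x^*$ recurs in the orbit. If $x_n=x^*$ for infinitely many $n$, then $x_n=x_m=x^*$ for some $n<m$, so the orbit is eventually periodic. A convergent eventually periodic sequence is eventually constant, which gives a fixed point directly.

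Otherwise the points $x^*,x_n,x_{n+1}$ are pairwise distinct for large $n$, and the contraction condition applies to this triple:
\[
d(Tx^*,x_{n+1})+d(x_{n+1},x_{n+2})+d(Tx^*,x_{n+2})\le \alpha\bigl(d(x^*,x_n)+d(x_n,x_{n+1})+d(x^*,x_{n+1})\bigr).
\]
The right-hand side tends to $0$, so $d(Tx^*,x_{n+1})\to 0$, and hence $Tx^*=x^*$.

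\emph{At most two fixed points.} Suppose $a,b,c$ were three distinct fixed points. Then $P(a,b,c)=P(Ta,Tb,Tc)\le \alpha P(a,b,c)$. Since $P(a,b,c)>0$ and $\alpha<1$, this is impossible.
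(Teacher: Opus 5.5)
Your proof is correct. The paper does not prove this statement; it only quotes it from Petrov. The closest argument in the paper is the proof of its main theorem for large triangle-perimeter contractions, and it shares your skeleton: the Picard orbit, nondegenerate consecutive triangles from the absence of fixed points and prime-period-2 points, the passage to the limit via the triple $(x^*,x_n,x_{n+1})$, and the three-fixed-points contradiction.

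The genuine difference is the Cauchy step.

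\begin{itemize}
\item \emph{Your route.} You use the uniform constant $\alpha$ to get $p_n\le\alpha^n p_0$ and sum a geometric series. This needs no boundedness of the orbit and even gives the explicit rate $d(x_n,x^*)\le \alpha^n p_0/(1-\alpha)$.
\item \emph{The paper's route.} It cannot argue this way. With only a non-uniform $\delta(\varepsilon)$, iterating along consecutive triangles gives at best $p_n\to 0$ with no summable rate, since these triangles eventually have all sides below any fixed $\varepsilon$. Instead it argues by contradiction with pairs satisfying $d(x_{m_k},x_{n_k})\ge\varepsilon_0$. It pulls the triangles $(x_{m_k+1},x_{m_k},x_{n_k})$ back $n_k$ steps with the fixed factor $\delta(\varepsilon_0/3)$, and bounds the initial triangle $P(x_{m_k-n_k+1},x_{m_k-n_k},x_0)$ by $4L$ using the bounded-orbit hypothesis.
\end{itemize}

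So your argument is shorter and quantitative but relies on uniformity. The paper's argument covers the non-uniform case at the price of the extra orbit assumption.

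A smaller difference concerns the possibility that $x^*$ lies on the orbit. You rule it out by noting that a convergent eventually periodic sequence is eventually constant. The paper instead first shows all $x_n$ are distinct, using the strict decrease of $P_n$. Both work. In your setup that case is in fact impossible, since it would force $x_n=x_{n+1}$ for some $n$.
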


	Very recently, the notion of large triangle-perimeter contraction was introduced combining the two aforementioned directions of research.

	\begin{definition}[Mesmouli \cite{Mesmouli}]
		Let $(X,d)$ be a metric space with  $|X|\geq 3$. A mapping $T \colon X\to X$ is a large triangle-perimeter contraction if 
		\begin{itemize}
			\item[(i)] for all pairwise distinct points $x,y,z \in X$, \(P(Tx,Ty,Tz) \leq \alpha P(x,y,z)\)
			\item[(ii)] for every \(\varepsilon > 0\), there exists a constant \(\delta(\varepsilon) \in (0,1)\) such that if
			\[\max\{d(x,y), d(y,z), d(x,z)\} \geq \varepsilon, \]
			then 
			\[P(Tx,Ty,Tz) \leq \delta(\varepsilon)  P(x,y,z). \]
		\end{itemize} 
	\label{largeP}
	\end{definition}

	\begin{remark}
		Condition (ii) implies condition (i), since if $x,y,z$ are pairwise distinct, taking \(\varepsilon = \max\{d(x,y), d(y,z), d(x,z)\} > 0\), there exists \(\delta(\varepsilon) \in (0,1)\) such that \(P(Tx,Ty,Tz) \leq \delta(\varepsilon)  P(x,y,z) < P(x,y,z)\).
	\end{remark}

	In \cite{Mesmouli}, a fixed point result for large triangle-perimeter contraction was given. However, the result does not hold without an additional assumption.

	\begin{theorem}[Mesmouli \cite{Mesmouli}]
		Let $(X,d)$ be a complete metric space with  $|X|\geq 3$, and let $T \colon X\to X$ be a large triangle-perimeter contraction in the sense of Definition \ref{largeP}. Assume that there exists an $x_0 \in X$ and $L > 0$ such that 
		\[ d(x_0,T^nx_0) \leq L, \quad \forall n \geq 1,\]
		then $T$ has a unique fixed point in \(X\).
	\end{theorem}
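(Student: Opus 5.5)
The plan has two stages. First I attempt the direct proof and locate where it fails. Then I turn that failure into a counterexample, since the surrounding text announces that this theorem does not hold as stated. The natural proof imitates Burton's argument along the orbit $x_n=T^nx_0$. Apply condition (ii) to triples $(x_{n},x_{n+1},x_{n+2})$, use the boundedness $d(x_0,T^nx_0)\le L$ to keep all perimeters bounded, and show that $P(x_n,x_{n+1},x_{n+2})\to 0$, hence that $(x_n)$ is Cauchy. The contradiction step in Burton's proof needs the contraction inequality for \emph{pairs} $x_{n+k},x_{m+k}$, so we would try to recover it from the perimeter condition.

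The key point is how (ii) is read.
\begin{itemize}
\item If (ii) is required for \emph{all} triples, put $z=x$. Then $P(Tx,Ty,Tx)=2d(Tx,Ty)$ and $P(x,y,x)=2d(x,y)$, so $T$ is a large contraction in Burton's sense. In that case the statement follows at once from Burton's theorem, including uniqueness.
\item If, as condition (i) and Petrov's definition suggest, (ii) is only imposed on pairwise distinct $x,y,z$, no pairwise inequality can be extracted. The argument then breaks exactly where Petrov's theorem needs the hypothesis that $T$ has no periodic points of prime period $2$.
\end{itemize}
In the second reading a $2$-cycle $\{a,b\}$ is invisible to the perimeter condition, because the image of any triple containing $a$ and $b$ is degenerate.

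This suggests the counterexample, which I consider the main step. Take $X=\{a,b,c\}$ with $d(a,b)=d(b,c)=d(a,c)=1$; this is complete and $|X|=3$. Define $Ta=b$, $Tb=a$, $Tc=a$. The only pairwise distinct triple, up to order, is $(a,b,c)$. Its image is $\{b,a,a\}$, so
\[
P(Ta,Tb,Tc)=2d(a,b)=2=\tfrac{2}{3}\,P(a,b,c).
\]
Hence (i) and (ii) hold with $\alpha=\delta(\varepsilon)=2/3$ for every $\varepsilon$. Every orbit is bounded by $L=1$, yet $T$ has no fixed point. So the statement is false under the pairwise-distinct reading.

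The expected repair, presumably developed in the rest of the paper, is to add Petrov's assumption that $T^2x\neq x$ for all $x$. With that assumption, one reruns the orbit argument, using the non-degeneracy of the triples $(x_n,x_{n+1},x_{n+2})$ to push the perimeters to zero and obtain Cauchy-ness. The delicate step is the final one: showing the consecutive perimeters tend to $0$ and excluding the degenerate case in the limit.
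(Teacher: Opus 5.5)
Your counterexample is essentially the paper's own: the paper takes $X=\{0,1,2\}$ with $d(x,y)=|x-y|$ and $T0=1$, $T1=0$, $T2=1$, again a $2$-cycle with the third point sent into it. Like the paper's, it correctly refutes the statement under the pairwise-distinct reading of (ii), which is the reading the paper tacitly adopts.

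Your remark on the other reading is also correct and goes beyond the paper. If (ii) is imposed on all triples, as its literal wording permits, then $z=x$ yields Burton's condition and the statement follows from Burton's theorem; the paper's counterexample then fails at $(x,y,z)=(0,1,0)$, where $P(T0,T1,T0)=P(1,0,1)=2=P(0,1,0)$.
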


	\begin{example}[Mesmouli \cite{Mesmouli}]
		Let \(X=[0,1)\) with the metric \(d(x,y)=|x-y|\) and define \(Tx= \dfrac x{1+x}\). Then \(T\) is not a uniform triangle-perimeter contraction in the sense of Petrov, but it is a large triangle-perimeter contraction in the sense of Definition \ref{largeP}.
		\label{Ex2.1}
	\end{example}
	
	\begin{remark}
		However, the mapping \(T\) in Example \ref{Ex2.1} is a large contraction, since for every pair \((x,y)\) with \(d(x,y) \geq \varepsilon\), we have
		\[|Tx-Ty| = \dfrac{|x-y|}{(1+x)(1+y)} \leq \dfrac{|x-y|}{1+\varepsilon}. \]
		Hence, \(\delta(\varepsilon) = \dfrac{1}{1+\varepsilon} < 1\) verifies condition (ii) of large contractions.
	\end{remark}

	\begin{example}[Mesmouli \cite{Mesmouli}]
		Let \(X=\n_0=\{0,1,2, \dots\}\) with the metric \(d(m,n)=|m-n|\), and define \(T(n)= \left[\dfrac{n}{2}\right]\). Then \(T\) fails Petrov's uniform condition, but satisfies large triangle-perimeter contraction's condition.
		\label{Ex2.2}
	\end{example}

	\begin{remark}
		The mapping \(T\) in Example \ref{Ex2.2} is not a large contraction, as 
		\[d(T2,T1) = |T2-T1| = 1 = |2-1| = d(2,1).\]
		However, \(T\) satisfies Petrov's uniform condition.
		If \(m,n,p \in \n_0\) such that \(m>n>p\), we have 
		\[P(Tm,Tn,Tp) = 2\left(\left[\dfrac{m}{2}\right] - \left[\dfrac{p}{2}\right]\right),\]
		and \(P(m,n,p) = 2(m-p)\).
		For \(m=2k\), \(p=2l\), \(k,l \in \n_0\), \(k >l\), we obtain \(P(Tm,Tn,Tp) = 2(k-l)\) and \(P(m,n,p) = 4(k-l),\) so \(P(Tm,Tn,Tp) \leq \dfrac12 P(m,n,p)\).
		For \(m=2k+1\), \(p=2l\), \(k,l \in \n_0\), \(k >l\), we obtain \(P(Tm,Tn,Tp) = 2(k-l)\) and \(P(m,n,p) = 2(2k+1-2l),\) so \(P(Tm,Tn,Tp) \leq \dfrac12 P(m,n,p)\).
		For \(m=2k+1\), \(p=2l+1\), \(k,l \in \n_0\), \(k >l\), we obtain \(P(Tm,Tn,Tp) = 2(k-l)\) and \(P(m,n,p) = 4(k-l),\) so \(P(Tm,Tn,Tp) \leq \dfrac12 P(m,n,p)\).
		For \(m=2k\), \(p=2l+1\), \(k,l \in \n_0\), \(k >l\), we obtain \(P(Tm,Tn,Tp) = 2(k-l)\) and \(P(m,n,p) = 2(2k-2l-1),\) so \(P(Tm,Tn,Tp) \leq \dfrac34 P(m,n,p)\).
		
		Thus, \(T\) satisfies Petrov's uniform condition.
	\end{remark}

	\section{Main result}
	
	Let us precede our main result by an example which illustrates the necessity of imposing an additional condition to the fixed point result. 
	
	\begin{example}
		Let \(X=\{0,1,2\}\) and \(d(x,y)=|x-y|\). If $T \colon X\to X$, defined by \(T0=1\), \(T1=0\), \(T2=1\), we have 
		\[P(T0,T1,T2) = P(1,0,1) = 2 \quad \text{and} \quad P(0,1,2)=4.\]
		Thus, \(T\) is a large triangle-perimeter contraction (in fact, \(T\) satisfies Petrov's uniform condition), but \(T\) has no fixed points.
	\end{example}

	The previous example underlines the importance of imposing an additional condition in order to obtain a fixed point, specifically, requiring that the mapping does not possess periodic points of prime period 2.
	
	Let us now state and prove our main result:

	\begin{theorem}
		Let $(X,d)$ be a complete metric space with  $|X|\geq 3$ and let $T \colon X\to X$ be a large triangle-perimeter contraction in the sense of Definition \ref{largeP}. Suppose \(T\) has no periodic points of prime period 2 and there exists $x_0 \in X$ and \(L>0\) such that \(d(x_0, T^nx_0)\leq L \), for all \(n \geq 1\). Then, \(T\) has a fixed point in \(X\). The number of fixed points is at most two.
	\end{theorem}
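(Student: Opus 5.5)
We follow Petrov's scheme, replacing the uniform constant $\alpha$ by $\delta(\varepsilon)$ and using the bounded orbit to control the perimeters. Put $x_n=T^nx_0$. If $x_n=x_{n+1}$ for some $n$, then $x_n$ is a fixed point and we are done. Otherwise consecutive iterates are distinct. Moreover $x_{n+2}\neq x_n$, since otherwise $x_n$ would be a periodic point of prime period $2$ (it is not fixed because $x_{n+1}\neq x_n$). Hence $x_n,x_{n+1},x_{n+2}$ are pairwise distinct for every $n$. Set
\[
p_n=P(x_n,x_{n+1},x_{n+2}).
\]
Condition (i) (a consequence of (ii), by the Remark after Definition \ref{largeP}) gives $p_{n+1}\le p_n$. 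The orbit bound yields $d(x_i,x_j)\le 2L$ for all $i,j$, so $p_n\le 6L$.

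\emph{Step 1: $p_n\to 0$.} The sequence $(p_n)$ decreases to some limit $p\ge 0$. Suppose $p>0$. Every triangle in the sequence has perimeter at least $p$, so its largest side is at least $\varepsilon:=p/3$. Condition (ii) then gives $p_{n+1}\le \delta(\varepsilon)p_n$ for all $n$, hence $p_n\le\delta(\varepsilon)^n p_0\to 0$, a contradiction. In particular $d(x_n,x_{n+1})\to 0$.

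\emph{Step 2: $(x_n)$ is Cauchy.} This is the main obstacle. The perimeter condition only controls triangles with pairwise distinct vertices, and Step 1 does not directly bound $d(x_n,x_m)$ for $m-n$ large. I would argue by contradiction. Suppose there are $\varepsilon>0$ and $m_k>n_k\to\infty$ with $d(x_{n_k},x_{m_k})\ge\varepsilon$. First we may assume the orbit is injective: if $x_i=x_j$ with $i<j$, the orbit is eventually periodic, and applying (ii) to triangles along the cycle strictly decreases the perimeters while the cycle returns to itself, which forces a short cycle; lengths $1$ and $2$ are excluded or give a fixed point. So the triples $x_{n_k},x_{n_k+1},x_{m_k}$ are pairwise distinct for large $k$. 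Applying (ii) $j$ times with the uniform constant $\delta=\delta(\varepsilon')$, where $\varepsilon'$ is slightly smaller than $\varepsilon$, gives
\[
P(x_{n_k+j},x_{n_k+1+j},x_{m_k+j})\le \delta^j\cdot 6L.
\]
The hypothesis of (ii) holds at each stage as long as the largest side stays at least $\varepsilon'$. Choose $j$ with $\delta^j 6L<\varepsilon/2$. On the other hand, the triangle inequality, the bound $d(x_i,x_{i+1})\to0$ from Step 1, and the choice of $m_k$ as the first index where the distance exceeds $\varepsilon$ (the standard minimality trick, picking $m_k$ as small as possible) show that $d(x_{n_k+j},x_{m_k+j})$ stays close to $\varepsilon$ for fixed $j$. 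This gives a contradiction. Making this bookkeeping precise is where care is needed, in particular checking that the largest side remains at least $\varepsilon'$ throughout, which is what justifies the uniform $\delta$.

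\emph{Step 3: the limit is fixed, and there are at most two fixed points.} By completeness $x_n\to x^*$. If $Tx^*\neq x^*$, then for large $n$ the points $x_n,x_{n+1},x^*$ are pairwise distinct (the orbit is injective), and (i) gives
\[
d(x_{n+1},Tx^*)\le P(x_{n+1},x_{n+2},Tx^*)\le P(x_n,x_{n+1},x^*)\to 0.
\]
Hence $x_{n+1}\to Tx^*$, so $Tx^*=x^*$. Finally, three distinct fixed points $a,b,c$ would satisfy $P(a,b,c)=P(Ta,Tb,Tc)<P(a,b,c)$ by the Remark after Definition \ref{largeP}, which is impossible. So $T$ has at most two fixed points.
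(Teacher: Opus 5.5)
Your proof is correct, but your Cauchy step runs in the opposite direction from the paper's.

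\textbf{The paper's route.} The paper never proves $p_n\to 0$. From $d(x_{m_k},x_{n_k})\ge\varepsilon_0$ it walks \emph{backwards} through the triangles $(x_{m_k+1-i},x_{m_k-i},x_{n_k-i})$, $i=0,\dots,n_k$, down to one containing $x_0$, whose perimeter is at most $4L$. Perimeters decrease forward along this chain, so the lower bound $\varepsilon_0$ on the last perimeter holds automatically for every earlier one. Hence each triangle has a side at least $\varepsilon_0/3$, and (ii) applies with the single constant $\delta(\varepsilon_0/3)$ for all $n_k$ steps. This gives $\varepsilon_0\le 4L\,\delta(\varepsilon_0/3)^{n_k}\to 0$.

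\textbf{Your route.} You iterate \emph{forward} a fixed number $j$ of steps from $(x_{n_k},x_{n_k+1},x_{m_k})$. Going forward, the lower bound does not propagate for free, which is why you need asymptotic regularity (your Step 1) and the triangle inequality:
$d(x_{n_k+i},x_{m_k+i})\ge\varepsilon-\sum_{l<i}\bigl(d(x_{n_k+l},x_{n_k+l+1})+d(x_{m_k+l},x_{m_k+l+1})\bigr)$.
Once $k$ is large, this exceeds $\varepsilon'=\varepsilon/2$ for all $i\le j$. With $\delta(\varepsilon')^j\,6L<\varepsilon/2$, that gives the contradiction. Only this lower bound is used, so the minimality choice of $m_k$ is unnecessary.

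\textbf{Comparison.} The paper's backward trick is shorter and makes Step 1 unnecessary. Your route is the more standard asymptotic-regularity scheme and is equally rigorous once the bookkeeping above is written out.

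Two small points:
\begin{itemize}
\item The injectivity argument needs no ``short cycle'' reasoning. If $x_i=x_j$ with $i<j$, then $p_j=p_i$, which contradicts the strict decrease $p_{n+1}<p_n$ from (i). State that decrease as strict rather than $p_{n+1}\le p_n$.
\item Your observation that injectivity forces $x_n\neq x^*$ for all large $n$ neatly replaces the paper's case split in the final step.
\end{itemize}
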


	\begin{proof}
		Let \(x_0 \in X\) such that \(d(x_0, T^nx_0)\leq L \), for all \(n \geq 1\). Consider the Picard iteration \((x_n)_{n\geq 0}\) defined by \(x_{n+1}:=Tx_n=T^{n+1}x_0\) and let for each \(n \geq 0\):
	\[P_n:= P(x_n, x_{n+1}, x_{n+2}) = d(x_n,x_{n+1})+d(x_{n+1},x_{n+2})+d(x_n,x_{n+2}).\]
		If there exists \(n\) such that $x_{n+1}=Tx_n$, then \(x_n\) is a fixed point. Otherwise, we have \(x_n\neq Tx_n\), for all \(n \geq 0\). Since $T$ has no periodic points of prime period \(2\), \(x_{n+2}=T^2x_n \neq x_n\), for each $n \geq 0$. Therefore, from condition (i) of Definition \ref{largeP}, we get 
	\[P_{n+1} = P(Tx_n, Tx_{n+1}, Tx_{n+2}) < P(x_n, x_{n+1}, x_{n+2}) = P_n,\]
		for every $n \geq 0$.
		
		Hence, \((P_n)\) is strictly decreasing and bounded below by \(0\).
		
		Suppose there exists positive integers \(m>n\), such that \(x_m=x_n\). Then, we have
		\[ x_{m+1} = Tx_m = Tx_n = x_{n+1}, \quad x_{m+2} = T^2x_m = T^2x_n = x_{n+2}. \]
		
		Thus, 
		\[ P_m = P(x_m, x_{m+1}, x_{m+2}) = P(x_n, x_{n+1}, x_{n+2}) = P_n, \]
		which contradicts the strict monotonicity of \((P_n)\). Therefore, if no fixed point occurs along the orbit, all \(x_n\) are distinct.
		
		Now, we prove that \((x_n)\) is a Cauchy sequence.
		
		Assuming that \((x_n)\) is not Cauchy, there exists \(\varepsilon_0 > 0\), such that, for every positive integer \(N\) we can find \(m > n > N\), with \(d(x_m,x_n) \geq \varepsilon_0\). Hence, there exist the sequences of indices \(m_k\), \(n_k\), with \(m_k>n_k \to \infty\), such that
		\[d(x_{m_k}, x_{n_k})\geq \varepsilon_0, \quad \text{for each } k \geq 0. \]
		
		By condition (i) of Definition \ref{largeP}, we get 
		\[P(x_{m_k+2}, x_{m_k+1}, x_{n_k+1}) < P(x_{m_k+1}, x_{m_k}, x_{n_k}) < P(x_{m_k}, x_{m_k-1}, x_{n_k-1}) < \dots \]
		\[\quad \qquad \qquad \qquad < P(x_{m_k-n_k+1}, x_{m_k-n_k}, x_{0}).\]
		
		Since \(d(x_{m_k}, x_{n_k}) \leq P(x_{m_k+1}, x_{m_k}, x_{n_k}),\) we get
		\[\varepsilon_0 \leq P(x_{m_k+1}, x_{m_k}, x_{n_k}) \leq P(x_{m_k}, x_{m_k-1}, x_{n_k-1}) \leq \dots \leq P(x_{m_k-n_k+1}, x_{m_k-n_k}, x_{0}).\]
		
		Since 
		\[\varepsilon_0 \leq P(a,b,c) \quad \text{implies} \quad \dfrac{\varepsilon_0}{3}\leq \max \{d(a,b), d(b,c), d(a,c)\}, \]
		we have \[ \dfrac{\varepsilon_0}{3}\leq \max \{d(x_{m_k+1-i},x_{m_k-i}), d(x_{m_k-i},x_{n_k-i}), d(x_{m_k+1-i},x_{n_k-i})\}, \]
		for each \(k \geq 0\) and \(i \in \{0,1,\dots, n_k\}\). Hence, by condition (ii) of Definition \ref{largeP}, we get
		\[ P(x_{m_k+1}, x_{m_k}, x_{n_k}) \leq \delta\left(\dfrac{\varepsilon_0}{3}\right) P(x_{m_k}, x_{m_k-1}, x_{n_k-1}) \leq \left[\delta\left(\dfrac{\varepsilon_0}{3}\right)\right]^2 P(x_{m_k-1}, x_{m_k-2}, x_{n_k-2})  \]
		\[\quad \qquad \qquad \qquad \leq \dots \leq \left[\delta\left(\dfrac{\varepsilon_0}{3}\right)\right]^{n_k}  P(x_{m_k-n_k+1}, x_{m_k-n_k}, x_{0}).\]
		
		However,
		
		\begin{align*} P(x_{m_k-n_k+1}, x_{m_k-n_k}, x_{0}) &= d(x_{m_k-n_k+1},x_{m_k-n_k})+ d(x_{m_k-n_k+1},x_0)+d(x_{m_k-n_k},x_0)\\ &\leq 2 d(x_{m_k-n_k+1},x_0) + 2 d(x_{m_k-n_k},x_0)\leq 4L, \end{align*}
		for each $k \geq 0$. Therefore, we obtain
		\[P(x_{m_k+1}, x_{m_k}, x_{n_k}) \leq 4L \cdot \left[\delta\left(\dfrac{\varepsilon_0}{3}\right)\right]^{n_k}.  \]
		Hence, we obtain,
		\[\varepsilon_0 \leq d(x_{m_k}, x_{n_k}) \leq P(x_{m_k+1}, x_{m_k}, x_{n_k}) \leq 4L \cdot \left[\delta\left(\dfrac{\varepsilon_0}{3}\right)\right]^{n_k} \to 0, \]
		as $k \to \infty$, which is a contradiction. Then, we conclude that \((x_n)\) is a Cauchy sequence.
		
		Since $(X,d)$ is complete and \((x_n)\) is Cauchy, there exists \(x^*\in X\) such that \(x_n\to x^*\) as \(n \to \infty\).
		
		If \(x_n \neq x^*\), for every \(n \geq 0,\) then, by condition (i) of Definition \ref{largeP}, we obtain 
		\[ P(x_{n+2}, x_{n+1}, Tx^*) < P(x_{n+1}, x_{n}, x^*) \to 0,\]
		as \(n \to \infty\), so \[d(x^*, Tx^*) = \lim\limits_{n\to \infty} d(x_{n+1}, Tx^*) \leq \lim\limits_{n\to \infty} P(x_{n+2}, x_{n+1}, Tx^*) = 0,\] i.e., \(x^*\) is a fixed point of \(T\). Otherwise, there exists \(n_0 \geq 0\) such that \(x^*=x_{n_0}\). Since all \(x_n\) are distinct, we have \(x_n \neq x^*\), for every \(n \geq n_0\) and by condition (i) of Definition \ref{largeP}, we get
		\[ P(x_{n+2}, x_{n+1}, Tx^*) < P(x_{n+1}, x_{n}, x^*) \to 0,\]
		as \(n \to \infty\), so \(d(x^*, Tx^*) = 0\), i.e., \(x^*\) is a fixed point of \(T\).
		
		Suppose \(x^*, y^*, z^* \in X\) are three distinct fixed points of \(T\). Then, by condition (i) of Definition \ref{largeP}, we get:
		\[P(x^*, y^*, z^*) = P(Tx^*, Ty^*, Tz^*) < P(x^*, y^*, z^*),\]
		which is a contradiction.
		
		Therefore, \(T\) has at most two fixed points.
	\end{proof}

	\begin{remark}
		If there exists a Picard iteration \((x_n)\) such that \(x_n \to x^*\), with \(x_n \neq x^*\), for every \(n\), then \(T\) has a unique fixed point.
		
		Indeed, if \(y^*\) is another fixed point of \(T\), for \(n\) sufficiently large, we get \(x_n, x^*, y^*\) pairwise distinct, so 
		\[P(x_{n+1}, Tx^*, Ty^*) \leq \delta(d(x^*,y^*))P(x_n, x^*, y^*),\]
		by where, taking the limit as \(n \to \infty\), we obtain		
		\[P(x^*, x^*, y^*) \leq \delta(d(x^*,y^*))P(x^*, x^*, y^*) < P(x^*,x^*, y^*),\]
		which is a contradiction.
	\end{remark}

	\begin{example}
		Let \(X = [0,1] \cup \{ 4n, 4n+1\; : \;n \in \n \}\) and \(d(x,y)=|x-y|\). Let $T \colon X\to X$ be defined by \(Tx = \dfrac{x}{x+1}\), if \(x \in [0,1]\) and \(T(4n) = 0\), \(T(4n+1) = 1-\dfrac1n\). 
		
		Obviously, $(X,d)$ is a complete metric space. We show that \(T\) is not a large contraction and it is not a uniform triangle-perimeter contraction in the sense of Petrov, but it is a large contraction in the sense of Definition \ref{largeP}.
		
		\begin{enumerate}
			\item[1.] Suppose \(T\) is a large contraction. Then, for \(\varepsilon = 1\), there exists \(\delta(\varepsilon) \in (0,1)\) such that
			\[d(x,y) \geq 1 \quad \implies \quad d(Tx,Ty)\leq \delta(\varepsilon). \]
			Taking \(x=4n+1\) and \(y=4n\), we have \(d(4n,4n+1) = 1 \geq \varepsilon\), so \[d(T(4n+1),T(4n)) \leq \delta(\varepsilon). \] This implies \(1-\dfrac1n \leq \delta(\varepsilon), \) which is not true if \(n \to \infty\).
			
			\item[2.] Taking \(x=\varepsilon\), \(y=2\varepsilon\), \(z=3\varepsilon\), with \(\varepsilon \in [0,\frac13]\), we have
			\[ \dfrac{P(Tx, Ty, Tz)}{P(x, y, z)} = \dfrac{1}{(\varepsilon+1)(3\varepsilon+1)} \to 1 \quad \text{as}  \quad \varepsilon \to 0,\]
			so \(T\) is not a uniform triangle-perimeter contraction in the sense of Petrov.
			
			\item[3.] For \(0\leq x < y < z \leq 1\), with \(d(z,x) \geq \varepsilon\), we have \(z \geq \varepsilon\), so 
			\[ \dfrac{P(Tx, Ty, Tz)}{P(x, y, z)} = \dfrac{1}{(x+1)(z+1)} \leq \dfrac1{1+\varepsilon} = \delta(\varepsilon) < 1.\]
			
			For \(0\leq x < y \leq 1\) and \(z = 4n\), we have \(d(Tx,Ty) < 1\), \(d(Ty,Tz) < 1\), \(d(Tx,Tz) < 1\), so 
			\[ \dfrac{P(Tx, Ty, Tz)}{P(x, y, z)} = \dfrac{3}{8n-2x} \leq \dfrac12,\]
			for every \(n \geq 1\) and \(x \in [0,1]\).
			
			If \(0\leq x < y \leq 1\) and \(z = 4n+1\), we have \(d(Tx,Ty) < 1\), \(d(Ty,Tz) < 1\), \(d(Tx,Tz) < 1\), so 
			\[ \dfrac{P(Tx, Ty, Tz)}{P(x, y, z)} = \dfrac{3}{8n+2-2x} \leq \dfrac12,\]
			for every \(n \geq 1\) and \(x \in [0,1]\).
			
			If \(0\leq x \leq 1\) and \(y,z \in \{ 4n, 4n+1\; : \;n \in \n \} \), we have \(d(Tx,Ty) < 1\), \(d(Ty,Tz) < 1\), \(d(Tx,Tz) < 1\) and \(P(x, y, z) \geq 3+4+1 = 8\), so
			\[ \dfrac{P(Tx, Ty, Tz)}{P(x, y, z)} = \dfrac{3}{8} < \dfrac12.\]
			
			For \(x,y,z \in \{ 4n, 4n+1\; : \;n \in \n \} \), we have \(d(Tx,Ty) < 1\), \(d(Ty,Tz) < 1\), \(d(Tx,Tz) < 1\) and \(P(x, y, z) \geq 1+3+4 = 8\), so
			\[ \dfrac{P(Tx, Ty, Tz)}{P(x, y, z)} = \dfrac{3}{8} < \dfrac12.\]
			
			Hence, taking \(\delta(\varepsilon) = \dfrac{1}{1+\varepsilon}\) when \(\varepsilon \in [0,1]\) and \(\delta(\varepsilon) = \dfrac12\) when \(\varepsilon >1\), we have
			\(P(Tx,Ty,Tz)  \leq \delta(\varepsilon) P(x,y,z).\)
			
			Thus, \(T\) is a large triangle-perimeter contraction in the sense of Definition \ref{largeP}.
			
		\end{enumerate}
	\end{example}
	
	\section{Conclusions}

	In this paper, we gave a counterexample to the main result stated in Theorem 3 from \cite{Mesmouli}, and proved that under an auxiliary condition, the results hold. Also, we showed that examples from \cite{Mesmouli} are not suitable and presented a new example which shows that this new framework is strictly broader than Burton's and Petrov's theory.

	\medskip
	\vspace{1.2ex}
	
\end{document}